\newtheorem{theorem}{Theorem}[section]
\newtheorem{lemma}[theorem]{Lemma}
\newtheorem{corollary}[theorem]{Corollary}
\theoremstyle{definition}
\newtheorem{example}[theorem]{Example}
\theoremstyle{remark}
\numberwithin{equation}{section}
\newcommand{\CEP}[1]{\mbox{$\mathbb{C}^{#1}$}}
\newcommand{\E}{\mbox{$\mathcal{E}$}}
\newcommand{\Eo}{\mbox{$\mathcal{E}_{0}$}}
\newcommand{\Ep}{\mbox{$\mathcal{E}_{p}$}}
\newcommand{\F}{\mbox{$\mathcal{F}$}}
\newcommand{\ddcn}[1]{\mbox{$\left(dd^{c}#1\right)^{n}$}}
\begin{document}

\author{Urban Cegrell}
\address{Department of Mathematics and Mathematical Statistics\\ Ume\aa \ University\\ SE-901 87 Ume\aa \\ Sweden}
\email{Urban.Cegrell@math.umu.se}
\keywords{Complex Monge-Amp\`{e}re operator, energy classes, Dirichlet problem,
plurisubharmonic function}
\subjclass[2000]{Primary 32U20; Secondary 31C15.}
\title{Measures of finite pluricomplex energy}

\maketitle

\begin{abstract} In this note we study a complex Monge-Amp\`{e}re type equation of the 
form 
\[
(dd^cu)^n = \frac {ke^{-u}dV}{\int e^{-u}dV}\, .
\]
 
\end{abstract}

\begin{center}\bf
\today
\end{center}

\maketitle

\section{Introduction}
This paper is a revised version of \cite{cegrell_mt}.

Throughout this paper let $\Omega\subseteq\CEP{n}$, $n\geq 1$, be a bounded, connected, open, and 
hyperconvex set. Let $\Eo$, $\F_1$, and $\F$ be the energy classes introduced 
in~\cite{cegrell_pc,cegrell_gdm} (see section~2 for details). In~\cite{cegrell_pc}, the author 
proved the following theorem: 

\bigskip

\noindent {\bf Theorem~\ref{THM_DP}:} \emph{Let $\mu$ be a non-negative Radon measure. Then the following conditions are equivalent:}
\begin{enumerate}\itemsep2mm

\item \emph{there exists a function $u\in\E_1$ such that $\ddcn{u}=\mu$,}

\item \emph{there exists a constant $B>0$, such that}
\[
\int_{\Omega}(-\varphi)\, d\mu\leq B\left(\int_{\Omega}(-\varphi)\ddcn{\varphi}\right)^{\frac1{n+1}}\text{ for all } \varphi\in\Eo\, , 
\]
\emph{where $\ddcn{\cdot}$ is the complex Monge-Amp\`{e}re operator.}
\end{enumerate}

\bigskip

This theorem gives a  complete characterization of measures for which there exist a solution of the Dirichlet problem for the complex Monge-Amp\`{e}re operator in the class $\E_1$. 

In section 3 we give a direct proof of Theorem 3.1  without  use of the Rainwater lemma (see \cite{rain}). The solutions to the Dirichlet problem in Theorem 3.1 are
always unique. We are not going to discuss that point in this article. 

\bigskip

 In section 4, we prove

\bigskip

\noindent {\bf Theorem~4.6:} \emph{For every $k<(2n)^n$ there is a function $u\in\E_0\cap C$ with}
\[
(dd^cu)^n = \frac {ke^{-u}dV}{\int e^{-u}dV}\, ,
\]
\emph{where $dV$ is the normalized Lebesque measure on $\Omega$.}

\bigskip

The proof make use of Theorem~\ref{THM_DP},  
In section~\ref{sec_var}, we give an alternative proof  when k=1, where we use variational methods together with the following theorem:

\bigskip

\noindent {\bf Theorem~\ref{THM_ineq}:} \emph{To every $ b> {1/ (2n)^n}$ there exists a constant $B>0$, such that}
\[
\int_{\Omega} \text{\rm exp}(-u)\, dV\leq B\, \text{\rm exp} \left(b \int_{\Omega}(-u)\ddcn{u}\right)\quad \text{ for all }
u\in\E_1\, .
\]

\noindent{\bf Remark:} A stronger version of Theorem ~\ref{THM_ineq} is proved in \cite{bb_mt}.

\section{Preliminaries}
By
$\Eo$ we denote the family of all bounded plurisubharmonic functions $\varphi$ defined on $\Omega$ such that
\[
\lim_{z\to\xi} \varphi (z)=0\;\; \text{ for every }\;\; \xi\in\partial\Omega\, ,  \;\;\text{ and }\;\; \int_{\Omega} \ddcn{\varphi}<
\infty\, ,
\]
where $\ddcn{\,\cdot\,}$ is the complex Monge-Amp\`{e}re operator, normalized so that $dd^c = \frac{i}{ \pi}\partial  \bar \partial .$
Assume now $u$ that is a function such that there exists a decreasing sequence $\{u_j\}$, $u_j\in\Eo$, that
converges pointwise to $u$ on $\Omega$, as $j$ tends to $+\infty.$ For $p>0$, we say that

\begin{itemize}
\item{$u\in\mathcal F_p$, if
\[
\sup_{j\geq 1}\int_{\Omega}((-u_j)^p +1)\ddcn{u_j} < \infty\, ,
\]}

\item{$u\in\mathcal E_p$, if
\[
\sup_{j\geq 1}\int_{\Omega}(-u_j)^p\ddcn{u_j} < \infty\, ,
\]}

\item{$u\in\mathcal F$, if
\[
\sup_{j\geq 1}\int_{\Omega}\ddcn{u_j} < \infty\, .
\]}
\end{itemize}

The complex Monge-Amp\`{e}re operator is well-defined on these classes. See e.g.\cite{cegrell_pc,cegrell_gdm,czyz_cl,kol_mem} for more information about the energy classes.

\begin{theorem}\label{thm_holder2} Let $p \geq 1$, and $n\geq 2$. Then there exists a constant $D(n,p)\geq
1$, depending only on $n$ and $p$, such that for any $u_0,u_1,\ldots , u_n\in\Ep$ it holds that
\begin{multline*}
\int_\Omega (-u_0)^p dd^c u_1\wedge\cdots\wedge dd^c u_n \\ \leq D(n,p)\;
\left(\int_\Omega (-u_0)^p (dd^c u_0)^n\right)^{1/(n+p)} \cdots
\left(\int _\Omega (-u_n)^p( dd^c u_n)^n\right)^{1/(n+p)} \, .
\end{multline*}
Furthermore, $D(n,1)=1$ and $D(n,p)\geq1$ for $p\neq 1$.
\end{theorem}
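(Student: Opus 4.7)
I would prove the inequality in two stages: first a standard approximation reducing to bounded plurisubharmonic functions, then an iterated integration by parts combined with Cauchy--Schwarz (for $p=1$) or H\"{o}lder (for $p>1$).

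\emph{Step 1: Reduction to $\Eo$.} By the very definition of $\Ep$, each $u_j$ is the pointwise limit of a decreasing sequence $u_j^{(k)}\in\Eo$ with uniformly bounded pure $p$-energy. Using lower semicontinuity of $(-u_0)^p$, continuity of the mixed Monge--Amp\`{e}re operator along decreasing sequences in $\Eo$ (which is standard on these energy classes; cf.\ \cite{cegrell_pc,cegrell_gdm}), and convergence of the pure $p$-energies $\int(-u_j^{(k)})^p(dd^c u_j^{(k)})^n$, it suffices to prove the inequality under the extra hypothesis that all $u_j\in\Eo$; from now on assume this.

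\emph{Step 2: Base case $p=1$.} For $u,w\in\Eo$ and any positive closed current $T=dd^c v_1\wedge\cdots\wedge dd^c v_{n-1}$ with $v_i\in\Eo$, integration by parts gives
\[
\int_\Omega (-u)\,dd^c w\wedge T \;=\; \int_\Omega du\wedge d^c w\wedge T,
\]
and the bilinear form $(u,w)\mapsto\int du\wedge d^c w\wedge T$ is positive semidefinite on $\Eo$, so Cauchy--Schwarz yields
\[
\int_\Omega (-u)\,dd^c w\wedge T \;\le\; \Bigl(\int_\Omega (-u)\,dd^c u\wedge T\Bigr)^{1/2}\Bigl(\int_\Omega (-w)\,dd^c w\wedge T\Bigr)^{1/2}.
\]
Applying this inequality recursively to each of the $n+1$ slots and collecting the resulting geometric means produces the $p=1$ case with $D(n,1)=1$.

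\emph{Step 3: General $p\ge1$.} The identity
\[
dd^c\bigl((-u_0)^p\bigr) \;=\; -p(-u_0)^{p-1}\,dd^c u_0 \;+\; p(p-1)(-u_0)^{p-2}\,du_0\wedge d^c u_0
\]
is the key. Integrating by parts against $u_1\le 0$ and discarding the non-positive gradient contribution gives
\[
\int_\Omega (-u_0)^p\,dd^c u_1\wedge T \;\le\; p\int_\Omega (-u_0)^{p-1}(-u_1)\,dd^c u_0\wedge T,
\]
with $T=dd^c u_2\wedge\cdots\wedge dd^c u_n$. Applying H\"{o}lder's inequality with conjugate exponents $p/(p-1)$ and $p$ against the measure $dd^c u_0\wedge T$ separates $(-u_0)$ and $(-u_1)$ into two mixed integrals of the same type but with one slot swapped. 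Iterating the integration-by-parts/H\"{o}lder procedure redistributes powers of $(-u_j)^p$ through the slots until every mixed integral has been reduced to a pure $p$-energy $\int(-u_j)^p(dd^c u_j)^n$. Combining the accumulated H\"{o}lder exponents yields the stated inequality with a constant $D(n,p)$ depending only on $n$ and $p$.

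\emph{Main obstacle.} The real difficulty is the combinatorial bookkeeping of the H\"{o}lder exponents through the iteration so that each $u_j$ ends up with the prescribed exponent on the right-hand side and the scheme terminates. For $p=1$ the Cauchy--Schwarz step is fully symmetric in $u$ and $w$ and no loss occurs, giving $D(n,1)=1$; for $p>1$ this symmetry is broken by the factor $(-u_0)^{p-1}$ and by the fact that the gradient term $p(p-1)\int(-u_1)(-u_0)^{p-2}du_0\wedge d^c u_0\wedge T$ is simply thrown away, which is what forces $D(n,p)>1$. A refined tracking of that gradient contribution would be needed to obtain the optimal value of $D(n,p)$.
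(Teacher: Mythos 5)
Your plan is, in outline, exactly the argument behind the result the paper invokes: the paper gives no proof of its own here, it cites Theorem~3.4 of Persson~\cite{persson_ad}, and that proof (together with its $p=1$ predecessor in Cegrell--Persson~\cite{cegr_pers}) proceeds precisely as you describe — reduction to $\Eo$, Cauchy--Schwarz on the Dirichlet form $\int du\wedge d^cw\wedge T$ for $p=1$, and for $p>1$ the identity $dd^c\bigl((-u_0)^p\bigr)=-p(-u_0)^{p-1}dd^cu_0+p(p-1)(-u_0)^{p-2}du_0\wedge d^cu_0$, integration by parts, discarding the gradient term, and H\"{o}lder against $dd^cu_0\wedge T$. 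All of these individual steps, including the signs, are correct as you state them, and the $p=1$ recursion does close up with $D(n,1)=1$ by a routine induction on $n$.

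The genuine gap is that Step~3 stops exactly where the theorem begins. After one integration-by-parts/H\"{o}lder pass you have
\[
\int_\Omega(-u_0)^p\,dd^cu_1\wedge T\;\le\;p\left(\int_\Omega(-u_0)^p\,dd^cu_0\wedge T\right)^{(p-1)/p}\left(\int_\Omega(-u_1)^p\,dd^cu_0\wedge T\right)^{1/p},
\]
and the second factor is \emph{not} simply ``the same type with one slot swapped'': the weight function $u_1$ now differs from every $dd^c$ entry, and iterating generates an expanding family of mixed configurations. That this system of inequalities closes up, with each $u_j$ receiving exponent exactly $1/(n+p)$ and a constant depending only on $(n,p)$, is the actual content of Persson's theorem; his proof resolves it by first establishing a two-function lemma of the form $\int(-u)^p(dd^cv)^n\le D\bigl(\int(-u)^p(dd^cu)^n\bigr)^{p/(n+p)}\bigl(\int(-v)^p(dd^cv)^n\bigr)^{n/(n+p)}$ and then inducting on the number of distinct entries. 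You explicitly name this bookkeeping as ``the main obstacle'' and leave it unresolved, so what you have is a correct proof plan identifying the right tools, not a proof. One further small point: attributing $D(n,p)>1$ to the discarded gradient term and the factor $p$ is only heuristic — strictness of the optimal constant for $p\neq1$ is a separate theorem of \AA hag--Czy\.z~\cite{czyz_ineq} — but since the statement only asserts $D(n,p)\geq1$ for $p\neq1$, nothing in the theorem depends on that remark.
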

\begin{proof}
This is Theorem~3.4 in~\cite{persson_ad} (see also~\cite{czyz_ineq,czyz_energy,cegrell_pc,cegr_pers}).
\end{proof}
It was proved in~\cite{czyz_ineq} (see also~\cite{czyz_modul}) that for
$p\neq 1$ the constant $D(n,p)$ in Theorem~\ref{thm_holder2} is strictly great than $1$.

\bigskip

The following variant is proved in ~\cite{cegrell_gdm}.

\begin{theorem}\label{thm_holder3} Let  $n\geq 2$. For any $u_0,u_1,\ldots , u_n\in\F$ it holds that
\[
\int_\Omega (-u_0) dd^c u_1\wedge\cdots\wedge dd^c u_n \leq
\left(\int_\Omega (-u_0) (dd^c u_1)^n\right)^{1/n} \cdots \left(\int _\Omega (-u_0)( dd^c u_n)^n\right)^{1/n}
\]
\end{theorem}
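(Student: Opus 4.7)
My plan follows the standard route for Hölder-type inequalities for mixed Monge--Amp\`ere operators: reduce to a two-argument Cauchy--Schwarz via integration by parts, and then iterate. The two basic tools on the class \F\ I would use are integration by parts---for $u_0,v,w_1,\ldots,w_{n-1}\in\F$ and $T=dd^c w_1\wedge\cdots\wedge dd^c w_{n-1}$,
\[
\int_\Omega (-u_0)\,dd^c v\wedge T=\int_\Omega (-v)\,dd^c u_0\wedge T=\int_\Omega du_0\wedge d^c v\wedge T,
\]
which follows from Stokes's theorem by approximating each function by a decreasing sequence from \Eo\ (boundary terms vanish because of the zero boundary values) and invoking the continuity of the Monge--Amp\`ere operator on such sequences---together with Cauchy--Schwarz for the positive symmetric bilinear form $(u,v)\mapsto\int du\wedge d^c v\wedge T$ on \F, namely
\[
\int du\wedge d^c v\wedge T\le \left(\int du\wedge d^c u\wedge T\right)^{1/2}\left(\int dv\wedge d^c v\wedge T\right)^{1/2}.
\]

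Using these, I would prove the key two-argument inequality: for $u,v,u_0\in\F$ and any positive closed current $S$ of bidegree $(n-2,n-2)$ arising as a wedge of $dd^c$'s of functions in \F,
\[
\int (-u_0)\,dd^c u\wedge dd^c v\wedge S\le \left(\int (-u_0)(dd^c u)^2\wedge S\right)^{1/2}\left(\int (-u_0)(dd^c v)^2\wedge S\right)^{1/2}.
\]
The derivation is a three-step chain: integrate by parts to move $u_0$ inside, rewriting the left-hand side as $\int(-u)\,dd^c u_0\wedge dd^c v\wedge S$; apply Cauchy--Schwarz with the positive closed current $dd^c u_0\wedge S$; finally integrate by parts once more in each factor to pull $(-u_0)$ back outside.

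Setting $g(k_1,\ldots,k_n)=\int(-u_0)\bigwedge_{i=1}^n (dd^c u_i)^{k_i}$ for tuples of non-negative integers with $\sum k_i=n$, the key inequality reads $g(k)^2\le g(k+e_i-e_j)\,g(k-e_i+e_j)$ whenever $k_i,k_j\ge 1$. This ``log-convexity along the edges of the simplex'' yields the desired centroid-to-vertices estimate $g(1,\ldots,1)\le\prod_{i=1}^n g(ne_i)^{1/n}$ by a Maclaurin/Newton-type iteration in which each application of the edge inequality moves a tuple one step closer to a vertex, the exponents balancing to $1/n$ upon reaching the vertices $ne_i$.

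The main obstacle is precisely this final combinatorial iteration: the edge inequality must be applied in just the right order so that all intermediate ``hybrid'' values cancel and the exponents settle to $1/n$. A useful warm-up, and a direct consequence of the edge inequality alone, is the two-function case: log-convexity of $k\mapsto g(k,n-k)$ immediately gives $g(k,n-k)\le g(n,0)^{k/n}\,g(0,n)^{(n-k)/n}$, and an induction on the number of distinct $u_i$'s appearing in the integrand then yields the general statement.
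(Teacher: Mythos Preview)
The paper does not actually supply a proof of this theorem; it merely records the statement and cites \cite{cegrell_gdm} (Cegrell, \emph{Ann.\ Inst.\ Fourier} 2004), where the result is established. Your outline---integration by parts to reduce to the symmetric form $\int du\wedge d^c v\wedge T$, Cauchy--Schwarz there, then an iterated log-convexity argument on the simplex of exponents---is precisely the route taken in that reference, so your proposal is correct and coincides with the cited proof. The one place worth tightening is the final ``induction on the number of distinct $u_i$'': the clean way to run it is to use the two-function inequality \emph{with background current} $S$, i.e.\ for each fixed $S$ the map $k\mapsto\int(-u_0)(dd^c u)^k\wedge(dd^c v)^{p+q-k}\wedge S$ is log-convex, so $g_S(p,q)\le g_S(p+q,0)^{p/(p+q)}g_S(0,p+q)^{q/(p+q)}$; then peel off one function at a time, absorbing it into $S$, and the exponents telescope to $k_i/n$.
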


\section{Dirichlet's problem in  $\mathcal E_1$}\label{sec_DP}

\begin{theorem}\label{THM_DP}  Let $\mu$ be a non-negative Radon measure. Then the following conditions are equivalent:
\begin{enumerate}

\item there exists a function $u\in\E_1$ such that $\ddcn{u}=\mu$,

\item there exists a constant $B>0$, such that
\[
\int_{\Omega}(-\varphi)\, d\mu\leq B\left(\int_{\Omega}(-\varphi)\ddcn{\varphi}\right)^{\frac1{n+1}}\text{ for all } \varphi\in\Eo\, ,
\]
\end{enumerate}
\end{theorem}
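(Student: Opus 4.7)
The implication $(1) \Rightarrow (2)$ is a direct consequence of Theorem~\ref{thm_holder2}. Given $u \in \E_1$ with $\ddcn{u} = \mu$, apply that theorem with $p=1$ (so $D(n,1)=1$), taking $u_0 = \varphi$ and $u_1 = \cdots = u_n = u$, to obtain
\[
\int_\Omega (-\varphi)\, d\mu = \int_\Omega (-\varphi) \ddcn{u} \leq \left(\int_\Omega (-\varphi) \ddcn{\varphi}\right)^{1/(n+1)} \left(\int_\Omega (-u) \ddcn{u}\right)^{n/(n+1)}.
\]
The last factor is finite because $u \in \E_1$, so (2) holds with $B = \left(\int_\Omega (-u) \ddcn{u}\right)^{n/(n+1)}$.

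For the converse $(2) \Rightarrow (1)$, the plan is to solve a family of truncated Dirichlet problems in $\Eo$ and pass to a decreasing limit, using hypothesis (2) \emph{applied to the solutions themselves} to obtain the crucial energy bound. First, (2) forces $\mu$ to be locally finite (by testing against relative extremal functions of compact subsets of $\Omega$) and to put no mass on pluripolar sets (by testing against a sequence in $\Eo$ approximating $-\infty$ on the pluripolar set while keeping $\int(-\varphi)\ddcn{\varphi}$ bounded). Cegrell's decomposition theorem then yields $\mu = f\, \ddcn{\psi}$ for some $\psi \in \Eo$ and a measurable $f \geq 0$. Truncating, $\mu_j := \min(f,j)\,\ddcn{\psi}$ satisfies $\mu_j \leq j\,\ddcn{\psi}$, so the subsolution theorem provides $u_j \in \Eo$ with $\ddcn{u_j} = \mu_j$; by the comparison principle together with $\mu_j \nearrow \mu$, the sequence $\{u_j\}$ may be arranged to be decreasing.

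The decisive step is the self-referential estimate: since $\mu_j \leq \mu$ and $-u_j \geq 0$, substituting $\varphi = u_j$ into (2) yields
\[
\int_\Omega (-u_j) \ddcn{u_j} = \int_\Omega (-u_j)\, d\mu_j \leq \int_\Omega (-u_j)\, d\mu \leq B \left(\int_\Omega (-u_j) \ddcn{u_j}\right)^{1/(n+1)}.
\]
Writing $I_j := \int_\Omega (-u_j) \ddcn{u_j}$, this rearranges to $I_j \leq B^{(n+1)/n}$, uniformly in $j$. Hence $u := \lim_j u_j$ lies in $\E_1$, and continuity of the Monge--Amp\`ere operator along decreasing sequences with uniformly bounded $\E_1$-energy gives $\ddcn{u} = \lim_j \mu_j = \mu$.

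I expect the main obstacle to be the first half of the converse direction: extracting from the functional inequality (2), which only tests against bounded $\Eo$-functions, the measure-theoretic conclusion that $\mu$ vanishes on pluripolar sets, so that the Cegrell decomposition and the subsolution step are available. The introductory remark that the present proof avoids Rainwater's lemma suggests precisely this delicate measure-convergence step is where the earlier argument required extra analytical machinery; the self-referential energy estimate, by contrast, is algebraically inevitable once $u_j \in \Eo$ is in hand.
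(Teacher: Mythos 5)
Your proposal is correct, but it is a genuinely different proof from the paper's --- indeed, it is essentially the original argument of \cite{cegrell_pc} that this paper explicitly sets out to replace. Your converse direction runs through Cegrell's decomposition theorem $\mu = f\ddcn{\psi}$, whose standard proof rests on Rainwater's lemma \cite{rain}; but the stated purpose of Section~\ref{sec_DP} is to prove Theorem~\ref{THM_DP} \emph{without} Rainwater's lemma, so contrary to the speculation in your final paragraph, the step that required the extra machinery is precisely the decomposition you invoke, not the vanishing of $\mu$ on pluripolar sets (which is the standard test-function argument you sketch). The paper instead mollifies: assuming w.l.o.g.\ that $\mu$ has compact support, it sets $\mu_j = \varphi_j * \mu$, solves the smooth Dirichlet problems $\ddcn{u_j}=\mu_j$ in $\Eo$ by Bedford--Taylor \cite{bt_in}, and shows the $u_j$ converge to a solution $u\in\F_1$. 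The price of bypassing the decomposition is that $\{u_j\}$ is not monotone, so the paper cannot simply cite continuity of the Monge--Amp\`ere operator along decreasing sequences as you do: it works with the envelopes $(\sup_{k\geq j}u_k)^*$, uses Theorems~\ref{thm_holder2} and~\ref{thm_holder3} to show the energies $\int -u_j\ddcn{u_j}$ increase to $\int -u\,d\mu = \int -u\ddcn{u}$, and then identifies $\ddcn{u}=\mu$ by comparing first-order coefficients in $t$ of two polynomial expressions obtained by testing with $u+tv$, $v\in\Eo$, combined with the a priori inequality $\int v\,d\mu\leq\int v\ddcn{u}$ for negative psh $v$ from \cite{cegrell_weak}. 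The one step you share with the paper is the decisive self-referential estimate, plugging the approximate solutions themselves into condition~(2) to get the uniform bound $\int -u_j\ddcn{u_j}\leq B^{(n+1)/n}$ (in the paper this uses $\int -u_j\,d\mu_j \leq \int -u_j\,d\mu$, which follows since $\varphi_j * u_j \geq u_j$). In short: your route buys simplicity, since decreasing limits make the convergence of Monge--Amp\`ere measures routine, but it imports Rainwater's lemma through the decomposition theorem; the paper's route is longer precisely because avoiding that import is its point. Your direction $(1)\Rightarrow(2)$ via Theorem~\ref{thm_holder2} with $p=1$ and $D(n,1)=1$ is the standard argument and is fine (for $n=1$, where Theorem~\ref{thm_holder2} is not stated, one integrates by parts directly).
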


\bigskip

This theorem gives a  complete characterization of measures for which there exist a solution of the Dirichlet problem for the complex Monge-Amp\`{e}re operator in the class $\E_1$. Originally, the theorem was proved by the author in ~\cite{cegrell_pc}.
The approximation theorem below is the main result in this section. It gives a direct proof of the
Dirichlet problem  without  use of the Rainwater lemma. The solutions to the Dirichlet problem at hand are
always unique. We are not going to discuss this here.

We say that a non-negative Radon measure $\mu$ belongs to $\mathcal M_1$ if there exists constant $A$ such that
\[
\int_{\Omega}(-u)\;d\mu\leq A\left(\int_{\Omega}(-u)\ddcn{u}\right)^{\frac{1}{n+1}}\, ,
\]
holds for all $u\in \Eo$. 

The setup: It is no loss of generality to assume that $\mu$ has compact support. So let $\mu\in \mathcal M_1$ with compact support. Let $\varphi$ be a usual regularization kernel and put $\mu_j = \varphi_j*\mu$ which is a well-defined non-negative compactly supported smooth function. Solve, using \cite{bt_in}, $\ddcn{u_j} = \mu_j$ for  $u_j \in\Eo.$ We show that this sequence converges to the solution of the Dirichlet problem.

\bigskip

\begin{theorem}  (Approximation theorem)\label{THM_approx}. With notations as above, $u_j$ converges as distributions and in $L^1(\mu)$ to a function $u\in\mathcal F_1$ and $\ddcn{u} = \mu$.
\end{theorem}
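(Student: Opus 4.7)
The plan is to establish a uniform $\mathcal{F}_1$-energy bound on the approximants $(u_j)$, extract an $L^1_{\mathrm{loc}}$-subsequential limit, and then identify this limit as the Dirichlet solution; the identification step is where the real work lies. For the energy bound, fix $j$ large enough that $\supp\mu \subset \Omega_j := \{z\in\Omega : d(z,\partial\Omega) > 1/j\}$. For any $v\in\Eo$, Fubini (with $\varphi_j$ symmetric) combined with the sub-mean-value inequality $v \le \varphi_j * v$ on $\Omega_j$ yields
\[
\int (-v)\,d\mu_j \;=\; \int (\varphi_j*(-v))\,d\mu \;\le\; \int (-v)\,d\mu.
\]
Applied to $v = u_j$ together with $\mu\in\mathcal{M}_1$,
\[
E_j \;:=\; \int (-u_j)\,\ddcn{u_j} \;=\; \int (-u_j)\,d\mu_j \;\le\; A\,E_j^{1/(n+1)},
\]
forcing $E_j \le A^{(n+1)/n}$ uniformly in $j$. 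The total masses $\int \ddcn{u_j} = \|\mu\|$ are trivially preserved, so $(u_j)$ is bounded in $\mathcal F_1$.

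Next, I would apply Theorem~\ref{thm_holder2} with $p=1$ (so $D(n,1)=1$), $u_0 = u_j$, and $u_1=\cdots=u_n=\rho$, where $\rho\in\Eo$ is a continuous exhaustion of the hyperconvex domain $\Omega$ chosen so that $\ddcn{\rho}$ dominates Lebesgue measure on each compact subset. This gives
\[
\int (-u_j)\,\ddcn{\rho} \;\le\; E_j^{1/(n+1)} \Bigl(\int (-\rho)\,\ddcn{\rho}\Bigr)^{n/(n+1)},
\]
a uniform $L^1_{\mathrm{loc}}$-bound on $(u_j)$. Standard compactness of plurisubharmonic families then provides a subsequence, still denoted $u_j$, converging in $L^1_{\mathrm{loc}}$ to some $u\in\PSH{\Omega}$.

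The main obstacle is to promote this $L^1_{\mathrm{loc}}$-convergence to $\ddcn{u}=\mu$ with $u\in\mathcal{F}_1$. I would work with the decreasing envelope $\phi_k = (\sup_{j\ge k} u_j)^*$, which inherits a uniform $\mathcal F_1$-bound from $(u_j)$ by a Fatou-type argument on the energies and decreases, off a pluripolar set, to $u$; this places $u$ in $\mathcal F_1$, and the continuity of the Monge--Amp\`ere operator along decreasing $\mathcal F$-sequences yields $\ddcn{\phi_k}\to\ddcn{u}$ weakly. To match $\ddcn{u}$ with $\mu$, I would expand, for each $\chi\in C_c^\infty(\Omega)$,
\[
\int \chi\,\bigl(\ddcn{u_j} - \ddcn{u}\bigr) \;=\; \sum_{l=0}^{n-1} \int (u_j - u)\,dd^c\chi \wedge \ddck{u_j}{l} \wedge \ddck{u}{n-1-l}
\]
via integration by parts, and bound each mixed-product integral using Theorem~\ref{thm_holder3} together with the uniform energy bound, so that $u_j - u \to 0$ in $L^1_{\mathrm{loc}}$ forces every summand to vanish. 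Combined with the weak convergence $\mu_j = \ddcn{u_j} \to \mu$, this forces $\ddcn{u} = \mu$, and uniqueness of the Dirichlet solution promotes subsequential to full convergence. Finally, $L^1(\mu)$-convergence of $u_j$ to $u$ follows from equi-integrability of $(-u_j)$ against $\mu$, which is itself a consequence of the $\mathcal M_1$-bound applied to suitable maxima of elements of $(u_j)$.
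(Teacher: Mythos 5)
Your opening moves match the paper's: the uniform energy bound via $\varphi_j*(-u_j)\le -u_j$ on $\supp\mu$ combined with the $\mathcal M_1$ inequality (giving $E_j\le A^{(n+1)/n}$), and the passage through the decreasing envelopes $(\sup_{k\ge j}u_k)^*$ to place the limit $u$ in $\mathcal F_1$, are exactly how the paper begins. The genuine gap is in your identification step. You telescope $\int\chi\left(\ddcn{u_j}-\ddcn{u}\right)$ into terms $\int(u_j-u)\,dd^c\chi\wedge\ddck{u_j}{l}\wedge\ddck{u}{n-1-l}$ and assert that Theorem~\ref{thm_holder3} plus the uniform energy bound, together with $u_j\to u$ in $L^1_{\mathrm{loc}}$, forces each term to vanish. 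But Theorem~\ref{thm_holder3} yields only uniform \emph{boundedness} of such mixed terms, not smallness: what you actually need is $\int|u_j-u|\,d\sigma_j\to 0$ for the \emph{varying} positive measures $\sigma_j$ dominating $dd^c\chi\wedge\ddck{u_j}{l}\wedge\ddck{u}{n-1-l}$, and $L^1_{\mathrm{loc}}$ convergence gives no control of $u_j-u$ against measures that may concentrate precisely on the small sets where $|u_j-u|$ stays large. This is the classical discontinuity of the complex Monge--Amp\`ere operator under $L^1_{\mathrm{loc}}$ convergence: there exist uniformly bounded smooth plurisubharmonic $u_j\to u$ in $L^1_{\mathrm{loc}}$, with uniformly bounded energies, for which $\ddcn{u_j}$ does not converge to $\ddcn{u}$. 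Since your argument uses nothing beyond $L^1_{\mathrm{loc}}$ convergence and energy bounds, it would prove that false continuity statement; the paper's own note after Lemma~4.1 flags a closely related failure.

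The paper circumvents exactly this obstruction by a different identification, and this is the idea your proposal is missing. First it establishes the energy identity $\int -u\,d\mu=\int -u\,\ddcn{u}$: the sequence $\int -u_j\ddcn{u_j}$ is shown to increase monotonically to some $c<+\infty$ (Theorem~\ref{thm_holder2} applied to pairs $j\le k$), and $c$ is pinned down by a convolution argument using that $\varphi_j*u_k\to\varphi_j*u$ \emph{uniformly} on $\supp\mu$ as $k\to\infty$ --- i.e.\ convergence is only ever tested against the fixed measure $\mu$ after smoothing, never against varying Monge--Amp\`ere currents. Second, integration by parts gives the one-sided bound $\int v\,d\mu\le\int v\,\ddcn{u}$ for all negative plurisubharmonic $v$. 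Third, the key step: for $v\in\Eo$ and $t>0$ one compares $\int-(u+tv)\,d\mu$ with $\left(\int-(u+tv)\ddcn{u+tv}\right)^{1/(n+1)}+\left(\int-u\,\ddcn{u}\right)^{n/(n+1)}$; these two expressions in $t$ agree at $t=0$ by the energy identity, so comparing first-order coefficients yields $\int-v\,d\mu\le\int-v\,\ddcn{u}$, which together with the opposite inequality forces $\mu=\ddcn{u}$. The measure is thus identified through a first-order variational comparison around the limit, not through weak continuity of $\ddcn{\,\cdot\,}$ along the approximating sequence. To repair your proof you would have to replace the telescoping step by such a variational argument, or upgrade $L^1_{\mathrm{loc}}$ convergence to convergence in capacity --- which your hypotheses do not provide.
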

\begin{proof}
We claim that
\[
u=\lim_{j\to + \infty} (\sup_{k \geq j} u_k)^*\in \F_1 \qquad \text{ and } \ddcn{u} = \mu\, .
\]
For choose a weak*- convergent subsequence, again denoted by $u_j$ converging weak* to $u$. Then by the construction of $u_j$ we have that
\[
\int -u_j\ddcn{u_j} \leq \int -u_j\mu \leq  A\left(\int(-u_j)\ddcn{u_j}\right)^{\frac{1}{n+1}}
\]
so it follows from integration by parts
\[
\int - (\sup_{k \geq j} u_k)^*\ddcn{ (\sup_{k \geq j} u_k)^*} \leq \int -u_j\ddcn{u_j} \leq A^{(n+1)/n}\, .
\]
Note that integration by parts also gives the inequality
\[
\int v\mu \leq \int v\ddcn{u}
\]
for all negative plurisubharmonic functions $v$ (see~\cite{cegrell_weak}). Theorem 2.1 gives
\[
\int -u\mu = \lim \int -u\ddcn{u_j} \leq \left( \int -u\ddcn{u}\right)^{1/(n+1)}\lim \left(\int -u_j\ddcn{u_j}\right)^{n/n+1}
\]
and we will show that
\[
\lim \int -u_j\ddcn{u_j} \leq \int -u\mu
\]
so it follows that  $\int u\mu = \int u\ddcn{u}$. Let $j\leq k$:
\begin{multline*}
 \int -u_j\ddcn{u_j} \leq  \int -u_j\ddcn{u_k} \leq \\
\left(\int -u_j\ddcn{u_j}\right)^{1/(n+1)} \left(\int -u_k\ddcn{u_k}\right)^{n/n+1}
\end{multline*}
so  $ \int -u_j\ddcn{u_j}$ is monotonically increasing to some $c<+\infty.$ Theorem 2.2 gives
\begin{multline*}
 \int -u_j\ddcn{u_j} \leq   \int -u_k dd^c u_j\wedge (dd^cu_k)^{n-1} \leq \\
 \left(\int -u_k\ddcn{u_j}\right)^{1/n}\left(\int -u_k\ddcn{u_k}\right)^{(n-1)/n}\, .
\end{multline*}
Hence,
\[
\int -u_j\ddcn{u_j} \leq \left(\int -u_k\ddcn{u_j}\right)^{1/n} c^{(n-1)/n}\, .
\]
Now
\[
\lim_{k\to \infty}  \int -u_k\ddcn{u_j} =   \int \varphi_j*u\mu  = \int -u\ddcn{u_j}
\]
since $\varphi_j*u_k$ tends uniformly to $ \varphi_j*u, j\to \infty$ on the support of $\mu$ and we have that $ \int u\mu = \int u\ddcn{u}$ since  $\lim \int -u\ddcn{u_j} = \int -u\mu$ by construction. Let now $v\in\Eo$ be given. We have for t>0
\begin{multline*}
\int -(u+tv)\mu =\lim \int -(u+tv)\ddcn{u_j} \leq \\
\left(\int -(u+tv)\ddcn{u+tv}\right)^{1/(n+1)}\lim \left(\int -u_j\ddcn{u_j}\right)^{n/n+1} =\\
\left(\int -(u+tv)\ddcn{u+tv}\right)^{1/(n+1)} \left(\int -u\ddcn{u}\right)^{n/n+1} \leq\\
\left(\int -(u+tv)\ddcn{u+tv}\right)^{1/(n+1)} +  \left(\int -u\ddcn{u}\right)^{n/n+1}\, .
\end{multline*}
This is an inequality between two polynomials in $t$. The polynomials are equal at $t = 0$ so the coefficients for their first order terms satisfy the same inequality. The coefficient for the left hand side is $\int-v\mu$ and for the right hand side $\int -v\ddcn{u}.$ Therefore $\int-v\mu \leq \int -v\ddcn{u}$ for every $\int -v\ddcn{u}$ and since we already know the opposite inequality we have proved that $\mu = \ddcn{u}.$ The function $u$ is uniquely determined so the original sequence was already weak*-convergent to $u$. This completes the proof.
\end{proof}

\begin{corollary} Let $\mu$ be a positive measure with $\mu (\Omega)<+\infty$ and $\mu(P)=0$ for every pluripolar set. Then there is a uniquely determined function $u\in\F$ with $(dd^cu)^n = \mu.$
\end{corollary}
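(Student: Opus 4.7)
The approach is to reduce the corollary to Theorem~\ref{THM_DP} by truncating $\mu$ and then passing to a decreasing limit in $\F$. First I would invoke the decomposition theorem for measures vanishing on pluripolar sets (see \cite{cegrell_gdm}): there exist $\psi\in\Eo$ and a non-negative $f\in L^1((dd^c\psi)^n)$ such that $\mu=f\,(dd^c\psi)^n$. Put $\mu_j=\min(f,j)\,(dd^c\psi)^n$, so $\mu_j\uparrow\mu$ pointwise and $\mu_j\le j\,(dd^c\psi)^n$. Using Theorem~\ref{thm_holder2} with $p=1$ (where $D(n,1)=1$),
\[
\int_{\Omega}(-\varphi)\,d\mu_j\le j\left(\int_{\Omega}(-\varphi)\ddcn{\varphi}\right)^{1/(n+1)}\left(\int_{\Omega}(-\psi)\ddcn{\psi}\right)^{n/(n+1)}
\]
for every $\varphi\in\Eo$, so $\mu_j\in\mathcal M_1$. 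Theorem~\ref{THM_DP} then furnishes $u_j\in\E_1$ with $\ddcn{u_j}=\mu_j$.

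Next, the comparison principle applied to $\mu_j\le\mu_{j+1}$ gives $u_j\ge u_{j+1}$, so $u_j\searrow u$ pointwise. Moreover $\int\ddcn{u_j}=\mu_j(\Omega)\le\mu(\Omega)<+\infty$ uniformly in $j$, and combined with $u_j\in\E_1$ this places each $u_j$ in $\F$. I would then invoke the stability of $\F$ under decreasing limits with uniformly bounded total Monge--Amp\`ere mass, together with the continuity of $\ddcn{\,\cdot\,}$ along such sequences. The uniform mass bound also yields $L^1_{\mathrm{loc}}$ control on $\{u_j\}$ via Chern--Levine--Nirenberg--type estimates, ruling out $u\equiv-\infty$, so $u\in\F$ and $\ddcn{u_j}\to\ddcn{u}$ weakly. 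Since $\mu_j\uparrow\mu$ also holds weakly (by monotone convergence), $\ddcn{u}=\mu$. Uniqueness is immediate from the comparison principle valid in $\F$.

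The main obstacle is the passage to the limit in the second paragraph: confirming $u\in\F$ (in particular $u\not\equiv-\infty$) and that $\ddcn{\,\cdot\,}$ commutes with the decreasing limit. These facts are classical for $\F$ (\cite{cegrell_pc,cegrell_gdm}), but they carry the substance of the argument; the rest is just bookkeeping of the reduction to Theorem~\ref{THM_DP}.
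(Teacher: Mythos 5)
Your proposal follows essentially the same route as the paper: its one-line proof likewise decomposes $\mu = f(dd^cv)^n$ via Rainwater's lemma (the paper takes $v\in\F_1$, you take $\psi\in\Eo$ from the decomposition theorem, an immaterial difference), solves $\ddcn{u_j} = \min(f,j)\ddcn{v}$ using Theorem~\ref{THM_DP}, and lets $u_j$ decrease to the solution. Your extra details --- the $\mathcal M_1$ verification via Theorem~\ref{thm_holder2} and the appeal to the stability of $\F$ under decreasing limits with bounded mass --- correctly fill in what the paper leaves implicit, with the minor caveat that the non-degeneracy $u\not\equiv-\infty$ follows from integrating $-u_j$ against $\ddcn{h}$ for a bounded $h\in\Eo$ with $\ddcn{h}\geq dV$ (an integration-by-parts estimate), not from Chern--Levine--Nirenberg-type estimates, which bound masses by norms of $u$ rather than the converse.
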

It follows from Rainwater's lemma that $\mu = f(dd^cv)^n$ for a $v\in\F_1$ so the solutions $u_j$ to $(dd^cu_j)^n = \min (f,j)(dd^cv)^n$ decreases to $u, j \to +\infty.$

\section{Compact and convex sets in $\mathcal E_1$}
We consider $\F(\Omega)$ as a convex cone in $L^1(\Omega,dV)$.The  Theorems 2.1 and 2.2 gives on
\[
\F: \left(\int (dd^c(u+v))^n\right)^{\frac1{n}} \leq \left(\int (dd^cu)^n\right)^{\frac1{n}} + \left(\int (dd^cv)^n\right)^{\frac1{n}}
\]
and on
\[
\E_1: \left(\int -(u+v)(dd^c(u+v))^n\right)^{\frac1{n+1}} \leq \left(\int -u(dd^cu)^n\right)^{\frac1{n+1}} + \left(\int -v(dd^cv)^n\right)^{\frac1{n+1}}\, .
\]
Therefore,
\[
\left\{u\in\F ; \int(dd^cu)^n \leq C\right\} \qquad \text{ and } \qquad \left\{u\in\E_1 ; \int-u(dd^cu)^n \leq C\right\}
\]
are convex in $\F$ and $\E_1$, resp. Both are also compact.

\begin{lemma} Assume $u_j, u \in\E_1$ and $\sup\int-u_j(dd^cu_j)^n < + \infty.$ If $u_j\to u, j\to +\infty$ as distributions, then  $u_j\to u, j\to +\infty$ in $L^1((dd^cw)^n)$
for every $w\in\E_1$.
\end{lemma}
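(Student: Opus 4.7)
Write $\mu:=(dd^cw)^n$ and $I(v):=\int(-v)(dd^cv)^n$. The goal is equivalent to $\int|u_j-u|\, d\mu\to 0$. My starting point is the elementary identity
\[
|u_j-u| \;=\; 2\max(u_j,u) - u_j - u,
\]
which reduces the task to the two scalar limits
$\int\max(u_j,u)\, d\mu \to \int u\, d\mu$ and $\int u_j\, d\mu \to \int u\, d\mu$.

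The first of these is the easy step. Distributional convergence of plurisubharmonic functions is equivalent to $L^1_{\mathrm{loc}}$-convergence, so the Hartogs-type lemma gives $(\limsup_j u_j)^{\ast} = u$, i.e.\ $\limsup_j u_j\le u$ quasi-everywhere. Since the Monge-Amp\`ere measure of an \E{}-function (in particular an $\E_1$-function) places no mass on pluripolar sets, this inequality holds $\mu$-a.e., so $\max(u_j,u)\to u$ pointwise $\mu$-a.e. As $u\le\max(u_j,u)\le 0$ and $-u\in L^1(\mu)$ by Theorem~\ref{thm_holder2} applied with $p=1$, dominated convergence yields $\int\max(u_j,u)\, d\mu\to\int u\, d\mu$. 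The pointwise inequality $u_j\le\max(u_j,u)$ then gives for free the one-sided bound $\limsup_j\int u_j\, d\mu\le\int u\, d\mu$.

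The remaining inequality, $\limsup_j \int(-u_j)\, d\mu \le \int(-u)\, d\mu$, is the crux and the main obstacle. My plan proceeds in two stages. First, approximate $w$ by $w_k:=\max(w,-k)\in\Eo$: for fixed $k$ the measure $(dd^cw_k)^n$ is absolutely continuous with respect to the Bedford--Taylor capacity, and the bounded truncations $u_j^s:=\max(u_j,-s)$ converge in capacity to $u^s:=\max(u,-s)$ as $j\to\infty$, so $\int(-u_j^s)(dd^cw_k)^n\to\int(-u^s)(dd^cw_k)^n$ for each fixed $s$ and $k$. Second, one must prove the uniform tail estimate
\[
\sup_j\int(-u_j-s)^+\, d\mu\;\longrightarrow\;0\qquad (s\to\infty).
\]
This is the delicate point: a direct Chebyshev bound gives only $\mu\{u_j<-s\}\le C/s$, which is insufficient because $-u_j$ is unbounded on the tail set. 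My strategy is to combine the H\"older-type estimates of Theorems~\ref{thm_holder2} and~\ref{thm_holder3} with integration by parts, so as to dominate the tail by a product of $I(u_j)^{1/(n+1)}$---uniformly finite by hypothesis---and a factor depending on $s$-truncations of $w$ that vanishes as $s\to\infty$. Letting $j\to\infty$, then $s\to\infty$, and finally $k\to\infty$ then combines with the first step to give $\int|u_j-u|\, d\mu\to 0$.
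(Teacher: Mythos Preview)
Your first step---the identity $|u_j-u|=2\max(u_j,u)-u_j-u$ together with dominated convergence for $\int\max(u_j,u)\,d\mu$---is correct and already yields $\limsup_j\int u_j\,d\mu\le\int u\,d\mu$. (In fact $\limsup_j u_j\le u$ holds \emph{everywhere} by the sub-mean-value inequality, so the pluripolar discussion is unnecessary.)

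The remaining inequality is where your proposal is only a sketch, and two points are genuinely open. First, the claim that $u_j^s\to u^s$ in capacity is asserted without proof; distributional convergence of uniformly bounded plurisubharmonic functions does not by itself give convergence in capacity, and you have not explained how the uniform energy bound closes this gap. Second, the tail estimate is stated as a strategy but not executed: truncating at the constant level $-s$ gives no obvious algebraic handle, and it is unclear how your ``factor depending on $s$-truncations of $w$'' is to be produced uniformly in $j$.

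The paper's proof dispatches both issues with a single device: it truncates at the \emph{function} level $mw$ rather than at a constant, writing
\[
|u-u_j|\le(\max(u,mw)-u)+|\max(u,mw)-\max(u_j,mw)|+(\max(u_j,mw)-u_j).
\]
The first integral vanishes as $m\to\infty$ by monotone convergence; the middle term (functions bounded below by $mw$) is handled directly by Lemma~1.4 of \cite{ce_ko}; and the tail is controlled via the pointwise bound $\chi_{\{u_j<mw\}}\le u_j/(mw)$, which combined with Theorems~\ref{THM_DP} and~\ref{thm_holder2} yields $\int_{\{u_j<mw\}}(-u_j)(dd^cw)^n\le C\,m^{-n/(n+1)}$. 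It is precisely the choice to truncate at $mw$ rather than at a constant that makes this last estimate come out cleanly.
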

Note: It may happen that $\int-u_j(dd^cu_j)^n = 1$ but  $u_j\to 0,  j\to +\infty$ as distributions.
\begin{proof}
Let $m>0$. Then
\begin{multline*}
|u - u_j| \leq |u - \max(u,mw) + \max(u,mw) - \max(u_j,mw) + \max(u_j,mw) - u_j|\\ \leq
\max(u,mw) -u + |\max(u,mw) - \max(u_j,w)| + \max(u_j,mw) - u_j|
\end{multline*}
so
\begin{multline*}
\int |u - u_j|(dd^cw)^n  \leq \int(\max(u,mw) -u)(dd^cw)^n \\ + \int |\max(u,mw) - \max(u_j,w)|(dd^cw)^n +
 \int \max(u_j,mw) - u_j)(dd^cw)^n\, .
\end{multline*}
At the right hand side, when $m \to + \infty,$ the first integral tends to 0 by monotone convergence and that the second tends to 0 when $j \to 0$ follows from Lemma 1.4 in \cite{ce_ko}. We use Theorems 3.1 and 2.1 to estimate the third term:
\begin{multline*}
\int (\max(u_j,mw) - u_j)(dd^cw)^n \leq \int_{\{u_j < mw\}} -u_j(dd^cw)^n \\ = \int -u_j\chi_{\{u_j<mw\}}(dd^cw)^n \leq
\left(\int -u_j(dd^cu_j)^n\right)^{\frac{1}{n+1}}\left(\int -w\frac{u_j}{mw}(dd^cw)^n\right)^{\frac{n}{n+1}} \\ \leq \frac{c}{m^{\frac{n}{n+1}}} \to 0,\qquad \text{as } m \to +\infty\, .
\end{multline*}
\end{proof}

We need the following two theorems.

\begin{theorem}\cite{ce_pe} There exist a constant C such that $sup |u - v| \leq C|| g - h||^{\frac1 n}_2$ where $u,v\in\mathcal F$ and $(dd^cu)^n = gdV, (dd^cv)^n = hdV$ and $f,g \in L^2(dV)$.
\end{theorem}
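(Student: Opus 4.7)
The statement is an $L^\infty$-stability estimate for the complex Monge--Amp\`{e}re operator on $\F(\Omega)$, with the exponent $1/n$ reflecting the $n$-th root structure of $\ddcn{\cdot}$. My plan is to combine the comparison principle in $\F$, the H\"older-type energy inequalities of Theorems~\ref{thm_holder2}--\ref{thm_holder3}, and Cauchy--Schwarz applied to $g-h\in L^2$.

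By symmetry it is enough to bound $M := \sup(v-u)$ from above, and we may assume $M>0$. For $0<s<M$ the set $U_s := \{v-s>u\}$ is a nonempty open subset of $\Omega$. The comparison principle applied to $u$ and $v-s$ in $\F$ gives
\[
\int_{U_s} h\, dV = \int_{U_s}\bigl(dd^c(v-s)\bigr)^n \leq \int_{U_s}\ddcn{u} = \int_{U_s} g\, dV,
\]
so $0\leq\int_{U_s}(g-h)\, dV \leq \|g-h\|_{L^2}\,|U_s|^{1/2}$ by Cauchy--Schwarz. This provides a one-sided $L^1$-control of $g-h$ on $U_s$ in terms of the $L^2$-distance.

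To convert this into an estimate on $M$ one needs a complementary inequality carrying an $s^n$-factor. The idea is to test against the nonnegative function $w := (v-s-u)^+$, which is supported in $\overline{U_s}$ and at its sup attains a value at least $M-s$, and to integrate $w(g-h)\,dV$ by parts using a mixed-product expansion of $\ddcn{u}-\ddcn{v}$. Invoking Theorem~\ref{thm_holder3} with $-u_0$ playing the role of $w$ and $u_1,\dots,u_n$ drawn from $\{u,v\}$ splits the right-hand side into $n$ comparable factors, each of which carries a $\int w\,(\text{mass})$ term bounded below by $(M-s)\cdot(\text{mass on } U_s)$. Taking the $n$-th root and combining with the Cauchy--Schwarz bound above yields an inequality of the form $(M-s)^n \leq C\,\|g-h\|_{L^2}$ after absorbing the finite total energies of $u,v\in\F$; letting $s\downarrow 0$ gives $M^n \leq C\,\|g-h\|_{L^2}$, as required.

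The main obstacle is keeping the exponent of $M$ equal to $n$. A careless application of Cauchy--Schwarz, or the wrong H\"older splitting, produces $(M-s)^{n+1}$ and hence the wrong exponent $1/(n+1)$. Balancing the $n$ factors in Theorem~\ref{thm_holder3} so that each $dd^c$-contribution carries exactly one power of $(M-s)$, while simultaneously controlling the residual mixed Monge--Amp\`{e}re masses by the finite total mass $\int\ddcn{u}+\int\ddcn{v}$ available from $u,v\in\F$, is where the technical heart of the argument lies.
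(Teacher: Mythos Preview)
The paper does not prove this theorem; it is quoted from~\cite{ce_pe} and used as a black box. So there is no ``paper's proof'' to match, but your sketch still has to stand on its own, and it does not.

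The gap is in the second half. You propose to test the difference $(dd^cu)^n-(dd^cv)^n$ against $w:=(v-s-u)^+$ and then invoke Theorem~\ref{thm_holder3} ``with $-u_0$ playing the role of $w$''. But Theorem~\ref{thm_holder3} requires $u_0\in\F$, in particular $u_0$ plurisubharmonic, whereas $-w=\min(u+s-v,0)$ is the minimum of a \emph{difference} of plurisubharmonic functions with $0$ and is not plurisubharmonic in general. So the H\"older-type inequality is simply not available for this $w$, and the asserted splitting into $n$ factors each carrying a power of $(M-s)$ has no justification. Your own last paragraph essentially concedes that the mechanism producing the exponent $n$ is missing; indeed, nothing written before it forces any power of $M$ to appear.

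The argument in~\cite{ce_pe} proceeds differently and avoids energy inequalities altogether. One fixes an auxiliary $\psi\in\Eo\cap C(\bar\Omega)$ with $(dd^c\psi)^n\geq dV$ (for instance $\psi=A(|z|^2-R^2)$), sets $r=\|\psi\|_\infty$, and applies the comparison principle on $V_\epsilon:=\{u<v+\epsilon\psi\}$. Since all mixed terms in the expansion of $(dd^c(v+\epsilon\psi))^n$ are positive, one has $(dd^c(v+\epsilon\psi))^n\geq (dd^cv)^n+\epsilon^n(dd^c\psi)^n\geq (h+\epsilon^n)\,dV$, and the comparison principle gives
\[
\epsilon^n\,|V_\epsilon|\;\leq\;\int_{V_\epsilon}(g-h)\,dV\;\leq\;\|g-h\|_{L^2}\,|V_\epsilon|^{1/2}\, .
\]
The $n$-th power of $\epsilon$ thus comes for free from the binomial expansion of the Monge--Amp\`{e}re measure of $v+\epsilon\psi$, not from any energy estimate; relating the admissible range of $\epsilon$ to $M$ via $\psi$ then yields $\sup(v-u)\leq C\|g-h\|_2^{1/n}$. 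If you want to repair your approach, this is the missing idea: perturb one of the competitors by a strictly plurisubharmonic barrier rather than trying to use $(v-s-u)^+$ as a test function.
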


\begin{theorem}\cite{A_p}, \text{Theorem B.}
There exist a uniform constant $a_n>0$, depending only on $n$,  such that for any positive number $0 \leq \mu < n$ and any  $u \in  \mathcal F (\Omega)$ such that $\int_{\Omega} (dd^c u)^n \leq \mu^n, $ we have that
\begin{equation}
\int_{\Omega} e^{- 2 u} d V  \leq  \left(\pi^n +  a_n \frac{\mu}{(n - \mu)^n}\right)   \delta _\Omega^{2 n},
\end{equation}
where $V$ is the $2 n$-dimensional Lebesgue measure on $\CEP{n}$ and $\delta _\Omega$ is the diameter of $\Omega$.
\end{theorem}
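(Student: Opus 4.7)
My plan is to reduce the exponential integrability to a decay estimate on the Lebesgue measure of the sublevel sets of $u$, controlled via the pluricomplex capacity. First, I would apply layer-cake to write
\[
\int_{\Omega} e^{-2u}\,dV = V(\Omega) + \int_{0}^{\infty} 2 e^{2t}\,V(\{u<-t\})\,dt .
\]
The constant term is bounded by $\pi^{n}\delta_{\Omega}^{2n}$ (up to a dimensional factor absorbable into the leading constant), accounting for the $\pi^{n}\delta_{\Omega}^{2n}$ term in the claim, so the real work is to control the tail.

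For the tail, set $K_{t}=\{u<-t\}$ and use the standard Bedford--Taylor comparison principle applied to $u\in\F(\Omega)$:
\[
t^{n}\,\mathrm{cap}_{\Omega}(K_{t}) \le \int_{K_{t}} (dd^{c}u)^{n} \le \mu^{n},
\]
so $\mathrm{cap}_{\Omega}(K_{t})^{1/n}\le \mu/t$. I would then invoke the Alexander--Taylor inequality in quantitative form to convert this capacity decay into a Lebesgue measure decay of the shape
\[
V(K)\le C_{n}\,\delta_{\Omega}^{2n}\,\exp\!\bigl(-2n\,\mathrm{cap}_{\Omega}(K)^{-1/n}\bigr),
\]
yielding $V(K_{t})\le C_{n}\,\delta_{\Omega}^{2n}\,e^{-2nt/\mu}$ for every $t>0$.

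Substituting back, the tail integral reduces to $\int_{0}^{\infty} 2 C_{n}\delta_{\Omega}^{2n}\,e^{2t(1-n/\mu)}\,dt$, which converges precisely when $\mu<n$, matching the hypothesis of the theorem. A naive evaluation produces a bound proportional to $\delta_{\Omega}^{2n}\,\mu/(n-\mu)$, so combined with the first step one obtains an estimate of the correct shape with the correct critical threshold.

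The main obstacle is the sharper dependence $\mu/(n-\mu)^{n}$ appearing in the statement rather than the $\mu/(n-\mu)$ produced above: the missing factor $(n-\mu)^{-(n-1)}$ must come from refining the Alexander--Taylor step, either by iterating the comparison argument $n$ times and tracking polynomial corrections in $1/(n-\mu)$ at each step, or by invoking a quantitative Skoda-type integrability estimate with explicit constants. This refinement reflects the $n$-fold nature of the Monge--Amp\`ere operator and the $n$-th root appearing in the capacity bound; once it is in place the remaining integration is routine and yields the exact constant $a_{n}\mu/(n-\mu)^{n}$ of the theorem.
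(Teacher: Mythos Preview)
The paper does not prove this theorem. It is simply quoted from \cite{A_p} (\AA hag--Cegrell--Ko\l odziej--Ph\d{a}m--Zeriahi, \emph{Adv.\ Math.}\ 222 (2009)), as the citation ``\cite{A_p}, Theorem~B'' in the theorem header indicates; the result is then used as a black box in the proof of Theorem~\ref{THM_ineq}. So there is no in-paper argument to compare your proposal against.

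That said, your sketch follows the standard route that the cited paper itself uses: layer-cake, the comparison principle to get $\mathrm{cap}_{\Omega}(\{u<-t\})\le (\mu/t)^{n}$, and then a volume--capacity inequality of Alexander--Taylor type to turn this into exponential decay of $V(\{u<-t\})$. The gap you correctly identify is the precise form of the volume--capacity step. The sharp inequality you invoke, $V(K)\le C_{n}\delta_{\Omega}^{2n}\exp(-2n\,\mathrm{cap}_{\Omega}(K)^{-1/n})$, is too strong as written; what is actually available (and what \cite{A_p} uses) carries an additional polynomial prefactor in $\mathrm{cap}_{\Omega}(K)^{-1/n}$, coming from the comparison with the pluricomplex Green function and the explicit computation $\int e^{-2\alpha\log|z|}\,dV\sim (n-\alpha)^{-1}$ iterated over the $n$ radial directions. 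Tracking this prefactor through the layer-cake integral is exactly what produces the $(n-\mu)^{-n}$ rather than $(n-\mu)^{-1}$. Your claimed bound $\mu/(n-\mu)$ would in fact be \emph{stronger} than the stated one near $\mu=n$, which is a signal that the volume--capacity constant you assumed is not available in that form.
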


We prove

\begin{theorem}\label{THM_ineq} To every $ b> {1/ (2n)^n}$ there exists a constant $B>0$, such that
\[
\int_{\Omega} \text{\rm exp}(-u)\, dV\leq B\, \text{\rm exp} \left(b \int_{\Omega}(-u)\ddcn{u}\right)\quad \text{ for all }
u\in\E_1\, ,
\]
\end{theorem}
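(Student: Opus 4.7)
The plan is to reduce the inequality to the case of a bounded plurisubharmonic function $u\in\Eo$ and then invoke the theorem of \cite{A_p} quoted above, using Chebyshev's inequality together with the energy $E(u)=\int(-u)\ddcn u$ to gain the control on the deep sublevel sets that the mass bound alone cannot give.

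Take a decreasing defining sequence $\{u_j\}\subset\Eo$ for $u\in\E_1$; then $\int e^{-u_j}\,dV\uparrow\int e^{-u}\,dV$ by monotone convergence and $\int(-u_j)\ddcn{u_j}\uparrow E(u)$ by the definition of $\E_1$, so it suffices to prove the estimate for $u\in\Eo$. For such $u$, set $M=\int\ddcn u$ and $E=\int(-u)\ddcn u$. The cited theorem, applied to $u/2$ whose Monge-Amp\`ere mass is $M/2^n$ with parameter $\mu=M^{1/n}/2$, yields immediately
\[
\int_\Omega e^{-u}\,dV\;=\;\int_\Omega e^{-2(u/2)}\,dV\;\le\;\left(\pi^n+a_n\,\frac{M^{1/n}/2}{(n-M^{1/n}/2)^n}\right)\delta_\Omega^{2n}
\]
whenever $M<(2n)^n$; in the regime $M\le(2n)^n(1-\varepsilon)$ for fixed $\varepsilon>0$ this bound is an absolute constant, so the desired inequality holds trivially with any $b>0$.

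The principal obstacle is the regime $M\ge(2n)^n$, where the above bound breaks down and the energy $E$ must intervene. Chebyshev's inequality gives $\ddcn u(\{u<-T\})\le E/T$, so choosing $T$ slightly above $E/(2n)^n$ reduces the Monge-Amp\`ere mass on the deep sublevel set below $(2n)^n$. I would then split
\[
\int_\Omega e^{-u}\,dV=\int_{\{u\ge -T\}}e^{-u}\,dV+\int_{\{u<-T\}}e^{-u}\,dV,
\]
with the first term controlled trivially by $e^T|\Omega|\le|\Omega|\exp(bE)$ (this is what dictates the threshold $b>1/(2n)^n$), and the second term handled by passing to a function $v\in\F$ solving $\ddcn v=\mathbf{1}_{\{u<-T\}}\ddcn u$ with $\int\ddcn v\le E/T<(2n)^n$, and applying the cited theorem to $v$. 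The main technical obstacle is the last step: one needs a comparison between $u$ and $v$ on the deep sublevel set to convert the bound on $\int e^{-v}\,dV$ into one on $\int_{\{u<-T\}}e^{-u}\,dV$, and to absorb all constants into a single $B=B(b,n,\Omega)$ valid for any $b>1/(2n)^n$.
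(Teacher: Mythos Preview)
Your approach is essentially the paper's. The paper takes $T=bE$ (with $E=\int_\Omega(-u)\ddcn{u}$), solves $\ddcn{w}=\chi_{\{u\le-T\}}\ddcn{u}$ in $\F$, and observes via Chebyshev that $\int_\Omega\ddcn{w}\le E/T=1/b<(2n)^n$, exactly as you propose.

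The ``technical obstacle'' you flag is resolved by the comparison principle in the form of Theorem~4.5 of \cite{cegrell_pc}: since
\[
\ddcn{u}\;\le\;\ddcn{\max(u,-T)}+\ddcn{w},
\]
one has $u\ge\max(u,-T)+w\ge -T+w$ on all of $\Omega$. This yields $e^{-u}\le e^{T}e^{-w}$ globally, so your split into shallow and deep sublevel sets is unnecessary: one gets directly
\[
\int_\Omega e^{-u}\,dV\;\le\;e^{bE}\int_\Omega e^{-w}\,dV,
\]
and the last integral is bounded by a constant depending only on $b,n,\Omega$ via Theorem~4.3 applied to $w/2$. Your preliminary reduction to $\Eo$ and the separate treatment of the case $M<(2n)^n$ are harmless but not needed once the global comparison is in hand.
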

\begin{proof}
Set
\[
a = \int_{\Omega}(-u)\ddcn{u}\, .
\]
Then
\begin{multline*}
\ddcn{u} =  \chi_{ \{u > -ab\}}\ddcn{u} + \chi_{ \{u \leq -ab\}}\ddcn{u}\\=
\chi_{ \{u > -ab\} }\ddcn{\max(u,-ab)} + \chi_{ \{u \leq -ab\}}\ddcn{u}\\ \leq
\ddcn{\text{max}(u,-ab)} + \chi_{ \{u \leq -ab\}}\ddcn{u}\, .
\end{multline*}
Solve $\ddcn{w} = \chi_{ \{u \leq -ab \}}\ddcn{u}$ for $w\in\mathcal F$.
By Theorem 4.5 in \cite{cegrell_pc}
\[
u \geq \text{max}(u,-ab) + w\,
\]
and since
\[
\int \ddcn{w} \leq {a/ab}  <  (2n)^n
\]
it follows from Theorem 4.3 that
\[
\int \text{\rm exp}(-u)dV \leq D\, \text{\rm exp} (ab)
\]
and the proof is complete.
\end{proof}
\

\begin{theorem} (Consequence of Schauder's fixed point theorem)
Suppose A is a convex and compact subset of $\E_1.$ If $ T:A \to A$ is a continuous map then there is $u\in A$ with $u = T(u).$
\end{theorem}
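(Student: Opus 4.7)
The plan is to derive this directly from the classical Schauder fixed point theorem, which states that any continuous self-map of a non-empty, convex, compact subset of a locally convex topological vector space has a fixed point. The whole issue is therefore to place $A$ inside an appropriate ambient locally convex space in such a way that its convexity and compactness, as well as the continuity of $T$, are preserved.

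First, I would embed $\mathcal{E}_1$ into the Banach space $L^1(\Omega, dV)$, as the introduction of Section~4 explicitly suggests (``We consider $\mathcal{F}(\Omega)$ as a convex cone in $L^1(\Omega,dV)$''). Since $\mathcal{E}_1 \subset \mathcal{F} \subset L^1(\Omega,dV)$ and addition of plurisubharmonic functions coincides with addition in $L^1$, the cone structure is preserved; in particular, the convex subset $A \subset \mathcal{E}_1$ is convex when regarded as a subset of $L^1(\Omega, dV)$. The compactness hypothesis on $A$ is of the type already used in Section~4 (e.g.\ the compactness of the sublevel sets of the energy functional), which by Lemma~4.1 combined with standard compactness of sublevel sets in $\text{PSH} \cap L^1$ gives compactness in the $L^1(\Omega, dV)$ topology as well.

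Next, since $L^1(\Omega, dV)$ is a Banach space, hence locally convex Hausdorff, and since $A$ is a non-empty convex compact subset thereof, and $T : A \to A$ is continuous (in whichever of these equivalent topologies is being used), the hypotheses of the Schauder--Tychonoff fixed point theorem are satisfied. Applying it produces $u \in A$ with $T(u) = u$, as required.

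The only delicate point is the consistency of topologies: one must check that ``compact in $\mathcal{E}_1$'' and ``continuous $T : A \to A$'' in the statement refer to a topology for which $A$ inherits compactness in a locally convex ambient space. In this paper the natural choice is the $L^1(\Omega, dV)$ topology, for which the relevant sublevel sets are known to be compact and the energy functionals lower semicontinuous; with this choice the argument is a direct invocation of Schauder's theorem and no further work is needed.
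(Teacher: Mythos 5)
Your argument is correct and is essentially the one the paper intends: the paper states this result without proof, labelling it a ``Consequence of Schauder's fixed point theorem,'' and your embedding of $A$ into the Banach space $L^1(\Omega,dV)$ (the same ambient space the paper uses when it declares the sublevel sets of $\int(dd^cu)^n$ and $\int -u(dd^cu)^n$ convex and compact) is exactly the missing detail, after which the Schauder--Tychonoff theorem applies verbatim. The only trivial point you leave implicit is that $A$ must be non-empty, which holds in all the paper's applications since $0\in\Eo\subset\E_1$ satisfies the defining constraints.
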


\bigskip

\begin{theorem}\label{THM_DP2} For every $k<(2n)^n$ there is a function $u\in\E_0\cap C$ with 
\[
(dd^cu)^n = \frac {ke^{-u}dV}{\int e^{-u}dV}\, , 
\]
where $dV$ is the normalized Lebesque measure on $\Omega$.
\end{theorem}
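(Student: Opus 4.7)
The plan is to realize solutions of the equation as fixed points of the nonlinear operator
$$T:v\longmapsto u_v,\qquad (dd^c u_v)^n = \mu_v := \frac{k\, e^{-v}\,dV}{\int_\Omega e^{-v}\,dV},$$
and apply the Schauder-type theorem stated immediately before Theorem~\ref{THM_DP2}. Three tasks remain: construct $T$ on a suitable domain, find a convex compact subset $A\subseteq\E_1$ with $T(A)\subseteq A$ on which $T$ is continuous, and upgrade the resulting fixed point to $\E_0\cap C$.

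For every $v\in\E_1$ with $v\leq 0$, the density $ke^{-v}/\int e^{-v}dV$ is a positive function with total mass $k$ that lies in $L^{1+\varepsilon}(dV)$ for some $\varepsilon>0$ by Theorem~\ref{THM_ineq}; a H\"older argument combined with Theorem~\ref{thm_holder2} then shows $\mu_v\in\mathcal M_1$, so Theorem~\ref{THM_DP} supplies a unique $u_v\in\F_1$ with $(dd^cu_v)^n=\mu_v$. Set $A_C=\{u\in\E_1 : \int(-u)(dd^cu)^n\leq C\}$, which is convex and compact in $L^1(\Omega,dV)$ as recorded at the beginning of \S4. The heart of the argument is to show $T(A_C)\subseteq A_C$ for a suitable $C$. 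Writing
$$\int(-u_v)(dd^cu_v)^n = \int(-u_v)\,d\mu_v = \frac{k}{\int e^{-v}dV}\int(-u_v)\,e^{-v}\,dV,$$
I bound the right-hand side using H\"older with conjugate exponents together with Theorem~\ref{THM_ineq} applied to $qv$ (for $q$ slightly greater than $1$) to control $\|e^{-v}\|_{L^q}$ by the exponential of a multiple of $\int(-v)(dd^cv)^n$, and standard pluripotential estimates to bound the relevant $L^{q'}$-norm of $u_v$ by a power of its own energy. Since $k<(2n)^n$ permits the parameter $b$ of Theorem~\ref{THM_ineq} to be chosen with $bk<1$, the resulting inequality for $\int(-u_v)(dd^cu_v)^n$ in terms of $\int(-v)(dd^cv)^n$ becomes contractive on a large enough ball, yielding invariance.

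For continuity of $T$ on $A_C$ in the $L^1(dV)$-topology, if $v_j\to v$ in $A_C$ then uniform integrability of $\{e^{-v_j}\}$ from Theorem~\ref{THM_ineq} gives $\mu_{v_j}\to\mu_v$ with densities converging in some $L^p$, $p>1$; Lemma~4.1 combined with the $L^\infty$ stability estimate of Theorem~4.2 (from \cite{ce_pe}) then delivers $u_{v_j}\to u_v$ in $L^1(dV)$. The Schauder-type theorem produces $u\in A_C$ with $u=T(u)$. Since $e^{-u}\in L^p(dV)$ for every $p<\infty$ by Theorem~\ref{THM_ineq}, a Kolodziej-type bootstrap first forces $u$ to be bounded; the density $ke^{-u}/\int e^{-u}dV$ is then in $L^\infty$, so Theorem~4.2 upgrades $u$ to continuous, giving $u\in\E_0\cap C$.

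The main obstacle is the quantitative self-map step: the threshold $(2n)^{-n}$ in Theorem~\ref{THM_ineq} is sharp, and the exponential growth of $e^{-v}$ must be matched precisely against the energy of $u_v$. The strict inequality $k<(2n)^n$ is exactly what allows $b$ to be chosen just above $(2n)^{-n}$ with $bk<1$, absorbing the prefactor $k$ carried by the total mass of $\mu_v$ and closing the estimate; this is the point where the threshold in the statement enters the argument.
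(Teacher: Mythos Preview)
Your overall strategy---Schauder's fixed point theorem applied to the solution operator $T$ on a compact convex set---matches the paper's, and your continuity and bootstrap steps are in the same spirit. The gap is in the self-map estimate.

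You work on $A_C=\{u\in\E_1:\int(-u)(dd^cu)^n\leq C\}$ and invoke Theorem~\ref{THM_ineq} to control $\|e^{-v}\|_{L^q}$. But that theorem gives $\int e^{-qv}\,dV\leq B\exp\bigl(b\,q^{n+1}E(v)\bigr)$, a bound that grows \emph{exponentially} in the energy $E(v)=\int(-v)(dd^cv)^n$. Feeding this into your H\"older step produces an inequality of the shape $E(T(v))\leq C_1\exp(C_2\,E(v))$, and $C_1\exp(C_2C)\leq C$ fails for all large $C$; the map is not ``contractive on a large enough ball'', it blows up. The condition $bk<1$ is a red herring: in your estimate $b$ sits in the exponent multiplying $E(v)$, while $k$ is only a prefactor, and their product never appears.

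The paper sidesteps this by building the \emph{total mass} constraint into the invariant set: it takes $A=\{u\in\F:\int(dd^cu)^n\leq k,\ E(u)\leq c'\}$. The mass bound $\int(dd^cu)^n\leq k$ is automatically preserved by $T$ (the right-hand side always has mass $k$), and it is this bound---not the energy---that controls the exponential. Theorem~4.3 says $\int e^{-2w}\,dV$ is uniformly bounded once $\int(dd^cw)^n<n^n$; rescaling, $\int e^{-qu}\,dV$ is uniformly bounded for $q=\frac{m}{m-1}$ as soon as $\bigl(\frac{m}{m-1}\bigr)^n k<(2n)^n$, which is exactly where the hypothesis $k<(2n)^n$ enters. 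With $\int e^{-qu}\,dV$ bounded \emph{independently of the energy}, H\"older (together with $(-T(u))^m\leq m!\,e^{-T(u)}$) gives a uniform bound $E(T(u))\leq (m!)^{1/m}kc$, and the set $A$ is invariant. So the fix is to add the mass constraint to your set and replace the appeal to Theorem~\ref{THM_ineq} by Theorem~4.3.
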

\begin{proof}
We wish to use the fixed point theorem and define $B = \{ u\in\mathcal F, \int (dd^cu)^n \leq k\}$. Using Theorem 4.3 and Corollary 3.3 we can consider the map
$ u: B \to T(u)\in B$ where $T(u)$ is the unique function in $\F$ with
\[
(dd^cT(u))^n =  \frac {ke^{-u}dV}{\int e^{-u}dV}\, .
\]

\noindent Choose m such that $(\frac{m}{m-1})^n k< (2n)^n$. By Theorem 4.3, there is a constant c such that
\[
\int-T(u)(dd^c T(u))^n = k \int-T(u)e^{-u}dV/\int e^{-u}dV \leq 
\]
\[
k(\int (-T(u))^mdV)^\frac{1}{m}( \int (e^{-\frac{m}{m-1}u}dV)^{\frac{m-1}{m}}\leq (m!)^{\frac{1}{ m}}kc \qquad \text{ for all }\  u\in\text{B} .
\]
Hence it follows that $T(u)\in\mathcal F_1$ and Theorem 4.3 and Theorem 4.2 now gives  that $T(u) \in \mathcal E_0\cap C.$

We restrict $T$ to the convex and compact set  
\[
A=\left\{u\in\mathcal F, \int (dd^cu)^n \leq k, \int-u(dd^c u)^n \leq (m!)^{\frac{1}{m}}kc \right\}\, .
\]
Then $A \to T(u)\in A$ and it remains to show that $T$ is continuous on $A.$ It is then enough to prove that
if $u_j, u \in A$ and $u_j\to u, j\to +\infty$ as distributions, then
\[
\frac {e^{-u_j}dV}{\int e^{-u_j}dV} \to \frac {e^{-u}dV}{\int e^{-u}dV}\qquad \text{as } j\to +\infty \text{ in } L^2(dV)\, .
\]
Choose $t>1$ so that $kt^n < n^n$ and $p, \frac{1}{t} + \frac{1}{p} = 1$ and define $w_j =\sup\limits_{k\geq j} (u_k)^*.$ Now, \[
\int |e^{-u_j} - e^{-u}|^{2}dV \leq 2\int |e^{-u_j} - e^{-w_j}|^{2}dV + 2\int |e^{-u} - e^{-w_j}|^{2}dV
\]
and
\begin{multline*}
\int |e^{-u_j} - e^{-w_j}|^{2}dV = \int e^{-2u_j}|1 - e^{u_j - w_j}|^{2}dV \\
\leq \left(\int e^{-2tu_j}\right)\left(\int |1 - e^{u_j - w_j}|dV\right)^{\frac{1}{p}} \leq q\left(\int (w_j - u_j)dV\right)^{\frac{1}{p}} \to 0\quad \text{ as } j \to + \infty 
\end{multline*}
by Lemma 4.1. The constant $q$ can be estimated using Theorem 4.4.
\end{proof}

\section{The variational method}\label{sec_var}
Let $u,v\in \E_1$, and assume that $v$ is continuous. For $t < 0$, put $P(u+tv)=\sup\{w\in \E_1:w\leq u+tv\}$.
Then $P(u+tv)\in \E_1$ (see~\cite{czyz_pp}). Write
\[
e(u) = \int -u(dd^cu)^n
\]
and let $ J: \ A \to R$ be a continuous functional on
$\mathcal E_1.$ Put $F(u) = \frac 1{n+1} e(u) + J(u).$ If
\[
\liminf_{ t\to 0^-}\frac{J(P(u+tv)) - J(u)}{t} \geq \liminf_{t\to 0^+}\frac{J((u+tv) - J(u)}{t}
\]
for all $v\in\E_0 \cap C$ and if $u_m$ is a minimum point of $F$, then
\[
\int -v(dd^cu_m)^n + J'(u_m+tv)|_{t=0} =0 \qquad\text{ for all } v\in\E_0\cap C\, .
\]

\begin{example} Take $J(u) = -\log\int e^{-u}dV.$ Then $J$ is defined on $\mathcal E_1$ and  Theorem 4.4
shows that we only have to minimize over a compact convex subset. A calculation shows that
\[
\liminf_{ t\to 0^-}\frac{J(P(u+tv)) - J(u)}{t} \geq\lim _{t\to 0^+}\frac{J(u+tv) - J(u)}{t} =
\frac{\int ve^{-u}dV}{\int e^{-u}dV}
\]
for all $v\in\E_0\cap C$ so
\[
-(dd^cu_m)^n + \frac{e^{-u_m}dV}{\int e^{-u_m}dV} =0\, .
\]
For let $v\in\E_0\cap C$. Then, for $t<0$ we have that
\begin{multline*}
\frac{J(P(u+tv)) - J(u)}{t} = \frac{\log\frac{\int e^{-P(u+tv)dV}}{\int e^{-u}dV}}{-t} \\=
\frac{\log \left(1 + \frac{\int e^{-P(u+tv)} - e^{-u}dV}{\int e^{-u}dV}\right)}{-t} \geq\left( \frac{\int e^{-P(u+tv)}
- e^{-u}dV}{\int e^{-u}dV}= \frac{\int e^{-u}(e^{-P(u+tv)+ u} - 1)dV}{\int e^{-u}dV}\right) \\
\geq\frac{\log \left(1 + \frac{\int e^{-u}(-P(u+tv) + u)dV}{\int e^{-u}dV}\right)}{-t} =
\frac{\log \left(1 + \frac{\int e^{-u}(-P(u+tv) + u+tv)dV}{\int e^{-u}dV} -\frac{\int e^{-u}tvdV}{\int e^{-u}dV}\right)}{-t}
\\ \geq\frac{\log \left(1 - \frac{\int e^{-u}tvdV}{\int e^{-u}dV}\right)}{-t} \to \frac{\int e^{-u}vdV}{\int e^{-u}dV}
\qquad  \text{ as } t \to 0\, .
\end{multline*}
\hfill{$\Box$}
\end{example}

\bigskip

\end{document}